\numberwithin{equation}{subsection}
\begin{document}
\title{On a Question of Poltoratski}
\author{Netanel Levi\footnote{Department of Mathematics, University of California, Irvine. Supported by NSF DMS-2052899, DMS-2155211, and Simons 896624. Email: netanell@uci.edu}}
\date{}
\maketitle
\sloppy
\begin{abstract}
We study half-line discrete Schr\"odinger operators and their rank-one perturbations. We establish certain continuity and stability properties of the Fourier transform of the associated spectral measures. Using these results, we construct a sparse potential whose essential spectrum contains an open interval, and show that for every rank-one perturbation the corresponding spectral measure is non-Rajchman. This resolves a question posed in \cite{Pol}.
\end{abstract}

\section{Introduction}
In this work, we consider discrete Schr\"odinger operators on the half–line. Given a potential $V:\mathbb{N}\to\mathbb{R}$ let
\[
(H\psi)(n)=
\begin{cases}
\psi(n-1)+\psi(n+1)+V(n)\psi(n),& n>1,\\[2pt]
\psi(2)+V(1)\psi(1),& n=1,
\end{cases}
\]
act on $\ell^2(\mathbb{N})$. For $\lambda\in\mathbb{R}$ we consider the rank–one perturbation
\[
H_\lambda:=H+\lambda\langle \delta_1,\cdot\rangle\delta_1.
\]
It is well known that for every $\lambda\in\mathbb{R}$, $H_\lambda$ is essentially self-adjoint (see, e.g., \cite{Ber}). Since rank–one perturbations are compact, by Weyl’s theorem the essential spectrum is independent of $\lambda$ (see, e.g., \cite{Kato}).

The behavior of spectral properties under rank–one perturbations has been studied extensively; see, e.g., \cite{DF,DJLS,Gor,Kato,Pol,Simon,SimonWolf}. In particular, the following was proved in \cite{Gor} (see also \cite{DJMS}).
\begin{theorem}\label{gordon_thm}
There exists a dense $G_\delta$ set $Z\subseteq\mathbb{R}$ such that, for every $\lambda\in Z$, $H_\lambda$ has no eigenvalues inside the essential spectrum.
\end{theorem}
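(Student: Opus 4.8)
The plan is to show that the set $B:=\{\lambda\in\mathbb R:\ H_\lambda$ has an eigenvalue in the (common) essential spectrum $\Sigma_{\mathrm{ess}}\}$ is meager and $F_\sigma$; then $Z:=\mathbb R\setminus B$ is the required dense $G_\delta$. Writing $\Sigma_{\mathrm{ess}}=\bigcup_{j\ge1}K_j$ with $K_j:=\Sigma_{\mathrm{ess}}\cap[-j,j]$ compact, and $B=\bigcup_j B_{K_j}$ where $B_K:=\{\lambda:\ \sigma_{\mathrm{pp}}(H_\lambda)\cap K\neq\emptyset\}$, it suffices to treat a fixed compact $K\subseteq\Sigma_{\mathrm{ess}}$. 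The main tool is rank-one perturbation theory relative to the cyclic vector $\delta_1$ (see, e.g., \cite{Simon}): with $\mu$ the spectral measure of $H=H_0$ and $\delta_1$, $F_\mu(z)=\int(x-z)^{-1}\,d\mu(x)$, and $G_\mu(E)=\int(x-E)^{-2}\,d\mu(x)\in(0,\infty]$, one has for $\lambda\neq0$: $E$ is an eigenvalue of $H_\lambda$ if and only if $G_\mu(E)<\infty$ (so that $v_E:=(H_0-E)^{-1}\delta_1\in\ell^2$) and $\lambda F_\mu(E)=-1$; in that case $v_E$ is the eigenfunction, with $\|v_E\|^2=G_\mu(E)$, $v_E(1)=F_\mu(E)$, $v_E(2)=1+(E-V(1))F_\mu(E)$, and $v_E(k+1)=(E-V(k))v_E(k)-v_E(k-1)$ for $k\ge2$ (eigenvalues of $H$ itself contribute at most $\lambda=0$ to $B$, which is irrelevant for meagerness).

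That $B_K$ is $F_\sigma$ is immediate: $\{G_\mu\le N\}$ is closed (Fatou), on it Cauchy--Schwarz gives $|F_\mu(E_1)-F_\mu(E_2)|\le N|E_1-E_2|$, hence $\Phi(E):=-1/F_\mu(E)$ is Lipschitz on $\{G_\mu\le N,\ |F_\mu|\ge1/m\}$, and $B_K=\bigcup_{N,m}\Phi\big(\{E\in K:\ G_\mu(E)\le N,\ |F_\mu(E)|\ge1/m\}\big)$ is a countable union of compacta. The heart of the matter is the meagerness of $B_K$, which I would organize as follows. For $m\ge1$ let $O_m$ be the set of $\lambda$ such that every eigenfunction of $H_\lambda$ with eigenvalue in $K$ carries more than half its $\ell^2$-mass beyond site $m$ (the eigenspaces here being one-dimensional, this is well defined); then $\mathbb R\setminus B_K=\bigcap_m O_m$. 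Each $O_m$ is open: given $\lambda_j\to\lambda$ with normalized eigenfunctions $\psi_j$, eigenvalues $E_j\in K$, and $\sum_{k\le m}|\psi_j(k)|^2\ge\tfrac12$, pass to a subsequence with $E_j\to E_*\in K$ and $\psi_j\rightharpoonup\psi_*$ weakly in $\ell^2$; coordinatewise convergence gives $\sum_{k\le m}|\psi_*(k)|^2\ge\tfrac12$, so $\psi_*\neq0$, while the finite range of the difference equation passes the eigenvalue relation to the limit and essential self-adjointness of $H_\lambda$ identifies $\psi_*$ as a genuine eigenfunction of $H_\lambda$ with eigenvalue $E_*\in K$, again concentrated in the first $m$ sites; thus $\mathbb R\setminus O_m$ is closed. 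By the Baire category theorem it now suffices to show that each $O_m$ is dense.

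This density statement is the crux, and it is exactly the content of Gordon's theorem \cite{Gor} (see also \cite{DJMS}): for any open interval $J$ of couplings one must exhibit $\lambda\in J$ for which no eigenvalue of $H_\lambda$ in $K$ has an eigenfunction concentrated in the first $m$ sites. Two features make this plausible. First, such concentration forces $G_\mu(E)=\|v_E\|^2\le2\sum_{k\le m}|v_E(k)|^2$, and by the recursion and boundary relation above $\sum_{k\le m}|v_E(k)|^2$ is a polynomial in $E$ and in $F_\mu(E)=-1/\lambda$, hence bounded for $E\in K$ and $\lambda\in J$; so the would-be eigenvalue lies in a fixed compact sublevel set of $G_\mu$, on which $F_\mu$ is Lipschitz. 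Second, $\lambda\mapsto$(eigenvalue) is injective, since $\lambda F_\mu(E)=-1$ determines $\lambda$ (Aronszajn--Donoghue). Combining these with the hypothesis $K\subseteq\Sigma_{\mathrm{ess}}$ — which is essential: without it $\mathbb R\setminus O_m$ may contain an interval, e.g. from isolated eigenvalues of $H_\lambda$ in a spectral gap of $H_0$ — one runs Gordon's perturbation argument to conclude that it cannot be that every $\lambda\in J$ carries such an eigenvalue. I expect the reductions and the openness of $O_m$ to be routine, and this final step — producing, arbitrarily near any given coupling, one at which every would-be near-origin eigenfunction of $H_\lambda$ has been pushed out to infinity — to be where essentially all of the difficulty lies.
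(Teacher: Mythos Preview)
The paper does not give its own proof of this theorem: it is quoted from the literature (Gordon \cite{Gor}, with the alternative reference \cite{DJMS}) and used only as background. So there is no in-paper argument to compare against; the relevant benchmark is the original Gordon/del~Rio--Makarov--Simon proof.

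Your framework matches that approach: the decomposition of $B$ into compact pieces, the Aronszajn--Donoghue description of eigenvalues via $G_\mu(E)<\infty$ and $\lambda F_\mu(E)=-1$, the $F_\sigma$ argument, and the sets $O_m$ together with their openness are all correct and are essentially the reductions in \cite{Gor,DMS}. However, the proposal has a genuine gap at the only nontrivial point. For the density of $O_m$ you write that ``it is exactly the content of Gordon's theorem'' and that ``one runs Gordon's perturbation argument'', and you close by saying this is ``where essentially all of the difficulty lies''. That is a deferral, not a proof: you are invoking the very result you are asked to prove. The two observations you list---a uniform bound on $G_\mu(E)$ coming from the polynomial control of $\sum_{k\le m}|v_E(k)|^2$, and injectivity of $\lambda\mapsto E$---are necessary ingredients, but they do not by themselves yield a contradiction from an interval $J\subset\mathbb R\setminus O_m$. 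What is missing is the mechanism that actually uses $K\subset\Sigma_{\mathrm{ess}}$: one must show that an interval of couplings producing eigenvalues in $K$ with uniformly bounded $G_\mu$ forces an impossible configuration (in Gordon's argument, via a continuity/monotonicity analysis of $E(\lambda)$ combined with Weyl sequences at $E\in\Sigma_{\mathrm{ess}}$, or in the del~Rio--Makarov--Simon formulation via the Lipschitz image of a $\mu$-null level set covering a nondegenerate interval of energies). Until that step is written out, the proposal is an accurate outline of the standard proof but not a proof.
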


Theorem~\ref{gordon_thm} has many interesting applications. For instance, it contrasts the almost–sure pure point spectrum of the Anderson model by showing that, even in that case, \emph{for generic $\lambda$} there is no point spectrum inside the essential spectrum (and since in that model the essential spectrum contains an interval, this excludes pure point spectrum; see the discussion below). Theorem~\ref{gordon_thm} is also related to results showing that, generically, half–line Schr\"odinger operators have singular continuous spectrum \cite{DJMS,Simon2}.

Another recent application is that, in certain cases, it can be used to prove Cantor spectrum \cite{KPS} for certain operators. Concretely, an interesting consequence of Theorem~\ref{gordon_thm} is the following: if for every $\lambda\in\mathbb{R}$ the operator $H_\lambda$ has pure point spectrum, then the essential spectrum must be a Cantor set. Indeed, for generic $\lambda$ there are no eigenvalues inside the essential spectrum, while the essential spectrum is obtained as the set of limit points of eigenvalues. Hence, the essential spectrum cannot contain intervals. Since in general the essential spectrum does not have isolated points, it follows that it is a perfect nowhere–dense set (hence a Cantor set).

Theorem~\ref{gordon_thm} and its consequences can also be phrased in measure–theoretic terms. For each fixed $\lambda$, the vector $\delta_1$ is cyclic for $H_\lambda$. Thus every spectral measure is absolutely continuous with respect to the spectral measure $\mu_\lambda$ of $\delta_1$, and questions of spectral continuity reduce to the study of $\mu_\lambda$. In particular, Theorem~\ref{gordon_thm} is equivalent to the statement that, for a dense $G_\delta$ set $Z\subseteq\mathbb{R}$, the restriction of $\mu_\lambda$ to the essential spectrum has no atoms for every $\lambda\in Z$.

It is natural to ask whether the pure point property in Theorem~\ref{gordon_thm} can be replaced by other properties of the spectrum or of spectral measures. It is known that one \emph{cannot} replace it by “the spectral measure has Hausdorff dimension zero” \cite{DJLS} or by “the spectral measure has packing dimension zero” \cite{Levi}.

Given a finite Borel measure $\mu$ on $\mathbb{R}$, we call it \emph{Rajchman} if its Fourier transform decays, namely if
\[
\widehat{\mu}(t)=\int e^{-itE}\,d\mu(E)\longrightarrow 0\qquad\text{as }|t|\to\infty;
\]
otherwise, we call it \emph{non–Rajchman}. Absolutely continuous measures are Rajchman by the Riemann–Lebesgue lemma, whereas any measure with an atom is non–Rajchman. In the singular continuous realm, both behaviors are possible (for a survey on Rajchman measures, see \cite{Lyons}).

In the context of Schr\"odinger operators, by the spectral theorem, the Fourier transform is given by
\[
\widehat{\mu}(t)\;=\;\langle\delta_1,e^{-itH}\delta_1\rangle,
\]
and $\left|\widehat{\mu}\left(t\right)\right|^2$ is also known as the survival probability. This and related averaged quantities have been studied in many works, see e.g.\ \cite{DJLS,DJLS2,DT,DT2,GJ,JK,JSB,KKL,Last,Pol}. In particular, the stability of the Rajchman property under perturbations was investigated in \cite{Pol}, where it was shown that, in general, no positive uniform statement can be made. Since pure point spectrum forces non–Rajchman while Theorem~\ref{gordon_thm} excludes pure point spectrum generically whenever the essential spectrum contains an interval, the following natural question was asked in \cite{Pol}: does there exist a half–line Schr\"odinger operator whose essential spectrum contains an interval and such that, for \emph{all} $\lambda\in\mathbb{R}$, the measure $\mu_\lambda$ is non–Rajchman?

In the present work we provide a positive answer to this question. Concretely, we prove:

\begin{theorem}\label{main_thm}
 There exists a Schr\"odinger operator $H:\ell^2(\mathbb{N})\to\ell^2(\mathbb{N})$ such that
 \begin{enumerate}
     \item $\sigma_{\mathrm{ess}}(H)= [-2,2]$;
     \item for every $\lambda\in\mathbb{R}$, the spectral measure $\mu_\lambda$ of $\delta_1$ with respect to $H_\lambda:=H+\lambda\langle\delta_1,\cdot\rangle\delta_1$ is non-Rajchman.
 \end{enumerate}
\end{theorem}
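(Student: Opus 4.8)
The strategy I would pursue rests on the elementary fact, recalled above, that any measure with an atom is non-Rajchman, together with the observation that, although Theorem~\ref{gordon_thm} prevents $\mu_\lambda$ from having atoms inside the essential spectrum for generic $\lambda$, an atom of $\mu_\lambda$ located in a spectral gap (here, outside $[-2,2]$) is stable — indeed, rank--one perturbations tend to produce more such atoms rather than fewer. Accordingly, I would construct $H$ with $\sigma_{\mathrm{ess}}(H)=[-2,2]$ possessing infinitely many eigenvalues just above $2$, and then deduce from rank--one perturbation theory that every $H_\lambda$ also has an eigenvalue above $2$, so that $\mu_\lambda$ has an atom and is therefore non-Rajchman.

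The construction is a sparse barrier potential. Fix pairwise distinct reals $v_n>0$ with $v_n\searrow 0$ (for instance $v_n=1/n$) and a rapidly increasing sequence $L_1<L_2<\cdots$, and set $V(L_n)=v_n$, $V(m)=0$ for $m\notin\{L_n\}$. Since $V\in c_0(\mathbb{N})$, multiplication by $V$ is a compact operator, so by Weyl's theorem $\sigma_{\mathrm{ess}}(H_\lambda)=\sigma_{\mathrm{ess}}(H)=[-2,2]$ for every $\lambda$; this yields item~(1), and it also shows that any point of $\sigma(H_\lambda)$ lying outside $[-2,2]$ is an isolated eigenvalue. Next, each barrier at $L_n$ should bind a state near $s_n:=\sqrt{v_n^2+4}>2$: with $r_n\in(0,1)$ the root of $r^{-1}-r=v_n$, the function $j\mapsto r_n^{\,|j-L_n|}$ solves $Hu=s_nu$ exactly at $L_n$ and on the surrounding free stretch, and truncating it to a neighbourhood of $L_n$ avoiding the other barriers produces a trial vector $\phi_n$ with $\|(H-s_n)\phi_n\|/\|\phi_n\|$ exponentially small, provided $L_{n+1}-L_n$ and $L_n-L_{n-1}$ are large compared with $1/v_n$ (one also needs $L_n\gg 1/v_n$, so that the bump at $L_n$ genuinely lies in the bulk and does bind). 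Choosing the gaps so that this error is smaller than the spacing between consecutive $s_n$, and using that $\sigma(H)\cap(2,\infty)$ is discrete, $H$ acquires a genuine eigenvalue $E_n$ close to $s_n$ for each $n$, with the $E_n$ pairwise distinct. Hence $H=H_0$ has infinitely many eigenvalues in $(2,2+\varepsilon)$ for every $\varepsilon>0$, so $\mu_0$ has infinitely many atoms.

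For $\lambda\neq 0$ I would invoke the Aronszajn--Krein formalism (cf.\ \cite{Simon,SimonWolf}). Order the eigenvalues of $H$ in $(2,\infty)$ as $\cdots<\eta_2<\eta_1$; by the previous step there are infinitely many, accumulating only at $2$. On each bounded gap $(\eta_{n+1},\eta_n)$ the Borel transform $m_0$ of $\mu_0$, $m_0(z)=\int(E-z)^{-1}\,d\mu_0(E)$, is real--analytic, with $m_0'(x)=\int(E-x)^{-2}\,d\mu_0(E)>0$, while $m_0(x)\to-\infty$ as $x\downarrow\eta_{n+1}$ and $m_0(x)\to+\infty$ as $x\uparrow\eta_n$, because $\mu_0(\{\eta_{n+1}\})$ and $\mu_0(\{\eta_n\})$ are positive — here one uses that $\delta_1$ is cyclic for $H$. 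Consequently, for every $\lambda\neq 0$ there is a unique $x^*\in(\eta_{n+1},\eta_n)$ with $1+\lambda\,m_0(x^*)=0$; since $x^*$ is at positive distance from $\sigma(H)$ one has $m_0'(x^*)<\infty$, and the rank--one eigenvalue criterion gives that $x^*$ is an eigenvalue of $H_\lambda$ with $\mu_\lambda(\{x^*\})=(\lambda^2 m_0'(x^*))^{-1}>0$. Letting $n$ range over all indices shows $\mu_\lambda$ has infinitely many atoms. Thus for every $\lambda\in\mathbb{R}$ the measure $\mu_\lambda$ has an atom, and as any measure with an atom is non-Rajchman, item~(2) follows.

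The one genuinely technical step is the binding and persistence estimate of the second paragraph: one must tune the $v_n$ and the separations $L_{n+1}-L_n$ so that every sparse bump produces an honest eigenvalue of $H$ (not merely approximate spectrum) and so that distinct bumps give distinct eigenvalues, which comes down to playing the exponentially small tails of the trial states against the spacing of the $s_n$ as $s_n\downarrow 2$. By contrast, the passage from $H$ to every $H_\lambda$ is soft and automatically uniform in $\lambda$, thanks to the monotonicity and the endpoint blow-up of $m_0$ on each gap — precisely the behaviour that is unavailable for eigenvalues embedded in $[-2,2]$, cf.\ Theorem~\ref{gordon_thm}. For the bare statement of Theorem~\ref{main_thm}, even a finite-rank $V$ consisting of two far-apart repulsive bumps would suffice; I expect that the point of using a sparse potential, together with the Fourier-transform continuity and stability properties established earlier, is to obtain a more robust conclusion — e.g.\ a lower bound on $\limsup_{|t|\to\infty}|\widehat{\mu_\lambda}(t)|$ uniform for $\lambda$ in compact sets, or a construction whose non-Rajchman behaviour is not produced by eigenvalues outside $[-2,2]$.
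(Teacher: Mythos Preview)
Your argument is correct and takes a genuinely different route from the paper. The key steps --- (i) a compact potential gives $\sigma_{\mathrm{ess}}=[-2,2]$; (ii) well-separated repulsive bumps bind distinct simple eigenvalues above $2$; (iii) cyclicity of $\delta_1$ forces $\mu_0(\{\eta_n\})>0$, so $m_0$ runs from $-\infty$ to $+\infty$ on each gap $(\eta_{n+1},\eta_n)$; (iv) Aronszajn--Krein then yields an atom of $\mu_\lambda$ in every such gap for every $\lambda\neq0$ --- are all valid, and your observation that two bumps already suffice for the theorem \emph{as stated} is correct and sharp.

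The paper proceeds quite differently. It never invokes eigenvalues in the resolvent set or interlacing; instead it builds a sparse potential with barrier heights $K_n\to\infty$ and, via the Dyson expansion, proves a ``prefix-freezing'' lemma (the $\langle\delta_1,e^{-itH_{V,\lambda}}\delta_1\rangle$ for $|t|\le T$ depend only on $V|_{\{1,\dots,N\}}$ up to a controllable error). It then iterates: at each stage an infinite barrier decouples a finite block, almost-periodicity on that block yields return times $t^{(j)}>j$ with $|\widehat{\mu}_\lambda|\ge\tfrac12$ uniformly for $|\lambda|\le j$, the barrier is lowered to a large finite $K_j$, and prefix-freezing protects all previously chosen times. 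The output is a sequence $t^{(j)}\to\infty$ with $|\widehat{\mu}_\lambda(t^{(j)})|\ge\tfrac{3}{10}$ for every $\lambda$.

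What each approach buys: yours is far more elementary and shows the bare statement is soft. The paper's method gives an explicit quantitative lower bound on $\limsup|\widehat{\mu}_\lambda|$, uniform on compacta in $\lambda$, obtained by a dynamical argument that does not reference the discrete spectrum at all; in principle it is portable to situations where no eigenvalues lie in the resolvent set. Your closing speculation is apt: the paper's operator in fact also has infinitely many eigenvalues (near each $K_n$), so your interlacing argument would apply to it as well --- the added value of the paper's machinery is methodological rather than necessary for Theorem~\ref{main_thm} as stated.
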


Our construction produces a Schr\"odinger operator with a \emph{sparse} potential. The idea is as follows. If we allow $V$ to take the value $+\infty$ at some site, the operator becomes decoupled and, on each finite block, the survival amplitude $\widehat{\mu}_\lambda(t)=\langle \delta_1,e^{-itH_\lambda}\delta_1\rangle$ is almost periodic with large values at arbitrarily large times; hence, for all $\lambda$, $\mu_\lambda$ is non-Rajchman. We then approximate such “infinite barriers” by large finite values to retain large values of $\widehat{\mu}_\lambda$ at a certain fixed set of times. Using a variation of Duhamel’s formula (time-ordered/Dyson expansion for a bounded perturbation), we show that if one keeps the potential equal to zero on a long interval between nonzero sites, the effect on previously chosen times is negligible. Iterating this procedure yields the desired sparse potential and proves Theorem~\ref{main_thm}.
\begin{remark}
    Theorem \ref{main_thm} provides new examples of non-Rajchman singular continuous measures. Indeed, by Theorem \ref{gordon_thm}, for a dense $G_\delta$ set of parameters the spectral measure $\mu_\lambda$ is singular continuous, while Theorem \ref{main_thm} shows that it is non-Rajchman. The existence of such measures is by now well known (see \cite{Lyons} and references therein), but our construction contributes a natural operator-theoretic family to this collection.
\end{remark}

It remains an interesting problem whether the pure–point property in Theorem~\ref{gordon_thm} can be weakened to other spectral or measure–theoretic properties.

The rest of the paper is structured as follows. In Section~2 we present preliminaries and technical lemmas. In Section~3 we carry out the construction and prove Theorem~\ref{main_thm}.

\section{Preliminaries and technical lemmas}

Throughout, $\Delta$ denotes the discrete Laplacian on $\ell^2(\mathbb{N})$ with Dirichlet boundary condition, namely
\begin{center}
    $\left(\Delta\psi\right)\left(n\right)=\begin{cases}
        \psi\left(2\right) & n=1,\\
        \psi\left(n-1\right)+\psi\left(n+1\right) & n>1
    \end{cases}.$
\end{center}
We will also denote by $\delta_1$ the delta function at site $1$, namely $\delta_1\left(1\right)=1$ and $\delta_1\left(n\right)=0$ for $n\neq 1$. For a potential $V:\mathbb{N}\to\overline{\mathbb{R}}:=\mathbb{R}\cup\{\infty\}$ we set $H_V:=\Delta+V$ acting on
\[
\mathrm{dom}(H_V)=\bigl\{\psi\in\ell^2(\mathbb{N}):\ \psi(n)=0\ \text{for all }n\in V^{-1}(\infty)\bigr\}.
\]
Given $\lambda\in\mathbb{R}$ we write $H_{V,\lambda}:=H_V+\lambda\langle\delta_1,\cdot\rangle\delta_1$ and denote by $\mu_{V,\lambda}$ the spectral measure of $\delta_1$ with respect to $H_{V,\lambda}$. Its Fourier transform is
\begin{center}
    $\widehat{\mu}_{V,\lambda}(t)=\bigl\langle \delta_1, e^{-it H_{V,\lambda}}\delta_1\bigr\rangle,\qquad t\in\mathbb{R}.$
\end{center}
We will also denote $\Delta_\lambda\coloneqq\Delta+\lambda\langle\delta_1,\cdot\rangle\delta_1$.

\subsection{The essential spectrum of operators with sparse potential}
We will use the following, which can be found in \cite[Lemma~2.11]{Levi} (see also \cite[Theorem~1.3]{JL1}).
\begin{lemma}\label{lemma_ess_spec}
		Let $\left(n_k\right)_{k=1}^{\infty}$ be a strictly increasing sequence of natural numbers which satisfies $\lim_{k\to\infty}(n_k-n_{k-1})=\infty$. Let $V:\mathbb{N}\to\mathbb{R}$ and suppose that for every $n\notin\left\{n_k:k\in\mathbb{N}\right\}$, $V\left(n\right)=0$ and that $V\left(n_k\right)\to\infty$. Then for every $\lambda\in\mathbb{R}$, $\sigma_{\text{ess}}\left(H_{V,\lambda}\right)=\left[-2,2\right]$.
\end{lemma}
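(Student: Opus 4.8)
The plan is to establish the two inclusions $[-2,2]\subseteq\sigma_{\mathrm{ess}}(H_{V,\lambda})$ and $\sigma_{\mathrm{ess}}(H_{V,\lambda})\subseteq[-2,2]$, after two harmless reductions. First, since $\lambda\langle\delta_1,\cdot\rangle\delta_1$ has rank one, hence is compact, Weyl's theorem gives $\sigma_{\mathrm{ess}}(H_{V,\lambda})=\sigma_{\mathrm{ess}}(H_V)$ for every $\lambda$, so it suffices to treat $H_V$. Second, because $V(n_k)\to\infty$, only finitely many $V(n_k)$ are $\le 0$; redefining $V$ to be $0$ at those sites is a finite-rank modification (of a bounded piece), which changes neither $\sigma_{\mathrm{ess}}$ nor the hypotheses, so we may assume $V\ge 0$. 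Then $H_V=\Delta+V$ is self-adjoint as a bounded perturbation of the multiplication operator $V$, and $H_V\ge\Delta\ge-2$, so $\sigma_{\mathrm{ess}}(H_V)\subseteq[-2,\infty)$ is immediate and the work is in the two remaining inclusions.

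For $[-2,2]\subseteq\sigma_{\mathrm{ess}}(H_V)$ I would produce explicit orthonormal Weyl sequences living in the field-free blocks $I_k:=\{n_{k-1}+1,\dots,n_k-1\}$, whose lengths $L_k:=n_k-n_{k-1}-1$ tend to infinity by hypothesis. Fix $E=2\cos\theta$ with $\theta\in(0,\pi)$ and set $u_k(n)=\sin\big((n-n_{k-1})\theta\big)$ for $n\in I_k$ and $u_k(n)=0$ otherwise. Since $I_k$ contains no barrier site, $Vu_k=0$, and the identity $\sin((m-1)\theta)+\sin((m+1)\theta)=2\cos\theta\,\sin(m\theta)$ shows that $(H_V-E)u_k$ is supported within bounded distance of the endpoints of $I_k$ with $\|(H_V-E)u_k\|\le C$ for an absolute constant, while $\|u_k\|^2=\sum_{j=1}^{L_k}\sin^2(j\theta)\ge L_k/2-C_\theta\to\infty$. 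The $u_k$ have pairwise disjoint supports escaping to infinity, so $u_k/\|u_k\|$ is an orthonormal Weyl sequence and $E\in\sigma_{\mathrm{ess}}(H_V)$; letting $\theta$ range over $(0,\pi)$ and using that $\sigma_{\mathrm{ess}}$ is closed gives $[-2,2]\subseteq\sigma_{\mathrm{ess}}(H_V)$. Only the condition $n_k-n_{k-1}\to\infty$ is used here; the size of $V(n_k)$ is irrelevant.

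The remaining inclusion $\sigma_{\mathrm{ess}}(H_V)\subseteq[-2,2]$ is where $V(n_k)\to\infty$ is essential, and this is the step I expect to be the main obstacle: without growth, infinitely many barriers of a common large height would generate eigenvalues accumulating above $2$, so the proof must genuinely exploit it. Suppose $E>2$ lies in $\sigma_{\mathrm{ess}}(H_V)$ and let $(\psi_m)$ be a Weyl sequence, i.e. $\|\psi_m\|=1$, $\psi_m\to 0$ weakly, and $r_m:=(H_V-E)\psi_m\to 0$ in $\ell^2$. Evaluating the eigenvalue equation at a barrier site $n_k$ gives $(V(n_k)-E)\psi_m(n_k)=-\psi_m(n_k-1)-\psi_m(n_k+1)+r_m(n_k)$, so for all $k\ge k_0$ (chosen so that $V(n_k)\ge 2E$) one has $|\psi_m(n_k)|\le\frac{2}{V(n_k)}\big(|\psi_m(n_k-1)|+|\psi_m(n_k+1)|+|r_m(n_k)|\big)$; squaring, multiplying by $V(n_k)$, and summing over $k\ge k_0$ (the points $n_k-1,n_k,n_k+1$ being distinct across $k$ once the gaps are large) yields
\[
\sum_{k\ge k_0}V(n_k)\,|\psi_m(n_k)|^2\ \le\ \frac{C\big(\|\psi_m\|^2+\|r_m\|^2\big)}{\inf_{k\ge k_0}V(n_k)}.
\]
Since $V(n_k)\to\infty$, the right-hand side is made arbitrarily small by taking $k_0$ large, while for each fixed $k_0$ the finite head $\sum_{k<k_0}V(n_k)|\psi_m(n_k)|^2\to 0$ as $m\to\infty$ by weak convergence; hence $\langle\psi_m,V\psi_m\rangle=\sum_k V(n_k)|\psi_m(n_k)|^2\to 0$. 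But then, using $\Delta\le 2$,
\[
E=\lim_{m\to\infty}\langle\psi_m,H_V\psi_m\rangle=\lim_{m\to\infty}\big(\langle\psi_m,\Delta\psi_m\rangle+\langle\psi_m,V\psi_m\rangle\big)\le 2,
\]
contradicting $E>2$. Thus $\sigma_{\mathrm{ess}}(H_V)\cap(2,\infty)=\emptyset$, and together with $\sigma_{\mathrm{ess}}(H_V)\subseteq[-2,\infty)$ and the previous paragraph this gives $\sigma_{\mathrm{ess}}(H_V)=[-2,2]$, hence $\sigma_{\mathrm{ess}}(H_{V,\lambda})=[-2,2]$ for every $\lambda$. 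The delicate point is exactly the passage from qualitative Weyl-sequence information to the quantitative decay at the barriers displayed above; feeding the eigenvalue equation back into itself at the sites where $V$ is large is how I would carry it out.
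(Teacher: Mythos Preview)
Your argument is correct. The Weyl-sequence construction on the free blocks $I_k$ is the standard way to obtain $[-2,2]\subseteq\sigma_{\mathrm{ess}}(H_V)$, and your upper bound is clean: the key identity $\langle\psi_m,H_V\psi_m\rangle\to E$ together with $\langle\psi_m,V\psi_m\rangle\to 0$ forces $E\le 2$. One small remark on the order of limits in the tail estimate: since $\|r_m\|\to 0$, you have $\|r_m\|\le 1$ for all large $m$, so the tail bound $C(1+\|r_m\|^2)/\inf_{k\ge k_0}V(n_k)$ is indeed uniform in $m$ once $m$ is large; this is implicit in what you wrote but worth making explicit.

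As for comparison with the paper: there is nothing to compare. The paper does not prove this lemma; it merely quotes it from \cite[Lemma~2.11]{Levi} (with a secondary reference to \cite[Theorem~1.3]{JL1}). Your write-up therefore supplies a self-contained proof where the paper defers to the literature. The approach you take---explicit sine-wave Weyl sequences for the lower inclusion, and a quadratic-form argument exploiting $V(n_k)\to\infty$ to kill the potential contribution for the upper inclusion---is essentially the classical one and is what one would expect to find in the cited references.
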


\subsection{Duhamel formula and its consequences}
Recall that a strongly continuous semigroup defined on a Banach space $X$ is a map $T:\mathbb{R}_{\ge 0}\to\calB(X)$ such that
\begin{enumerate}
    \item $T(0)=\text{Id}$.
    \item $T(t+s)=T(t)T(s)$.
    \item For every $x\in X$, $\|T(t)x-x\|\to 0$ as $t\to 0$.
\end{enumerate}
Every strongly continuous semigroup has a generator, namely the densely defined operator \mbox{$A:D(A)\to X$} given by
\[
Ax=\lim_{t\downarrow 0}\frac{T(t)x-x}{t}\quad(x\in D(A)),
\]
and we write formally $T(t)=e^{tA}$. If $X$ is a Hilbert space and $H$ is self-adjoint on $D(H)$, then $U(t):=e^{-itH}$, $t\in\mathbb{R}$, is a strongly continuous unitary group (Stone’s theorem). For the semigroup formulas below we substitute $A:=-\,iH$ and restrict to $t\ge 0$. We will use the following two well-known results, see e.g.\ \cite{EngelNagel,Kato}.
\begin{theorem}\emph{\cite[Corollary III.1.7]{EngelNagel}}
Consider two strongly continuous semigroups $\bigl(T(t)\bigr)_{t\ge 0}$ with generator $A$ and $\bigl(S(t)\bigr)_{t\ge 0}$ with generator $C$ on a Banach space $X$ and assume that $C=A+B$ for some bounded operator $B:X\to X$. Then
\begin{equation}\label{Duhamel_eq}
    S(t)x \;=\; T(t)x \;+\; \int_0^t T(t-s)\,B\,S(s)x\,ds
\end{equation}
holds for every $t\ge 0$ and $x\in X$.
\end{theorem}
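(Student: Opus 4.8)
The plan is to establish \eqref{Duhamel_eq} first for $x$ in the common domain and then extend to all of $X$ by density. Since $B$ is bounded and everywhere defined, $D(C)=D(A+B)=D(A)$; call this common domain $D$. Fix $t\ge 0$ and $x\in D$, and consider the $X$-valued function $f\colon[0,t]\to X$ given by $f(s):=T(t-s)\,S(s)\,x$. Because $x\in D$ we have $S(s)x\in D$ for every $s\in[0,t]$, so the orbit $s\mapsto S(s)x$ is norm-differentiable with derivative $C\,S(s)x$, and for any $y\in D$ the map $r\mapsto T(r)y$ is norm-differentiable with derivative $A\,T(r)y=T(r)\,A\,y$. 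The strategy is to differentiate the product $f$, integrate, and read off the two endpoint values.

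The crux is the product-rule computation. Writing the difference quotient and inserting $\pm\,T(t-s-h)\,S(s)\,x$,
\[
\frac{f(s+h)-f(s)}{h}=T(t-s-h)\,\frac{S(s+h)x-S(s)x}{h}+\frac{T(t-s-h)-T(t-s)}{h}\,S(s)x .
\]
Letting $h\to 0$: in the first term $\frac{S(s+h)x-S(s)x}{h}\to C\,S(s)x$ in norm while $T(t-s-h)\to T(t-s)$ strongly, and since strongly continuous semigroups are locally norm-bounded (uniform boundedness principle) one may pass to the limit in the product to get $T(t-s)\,C\,S(s)x$; the second term converges to $-A\,T(t-s)\,S(s)x=-T(t-s)\,A\,S(s)x$ because $S(s)x\in D$. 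Hence $f$ is differentiable with
\[
f'(s)=T(t-s)\bigl(C-A\bigr)S(s)x=T(t-s)\,B\,S(s)x ,
\]
and $f'$ is continuous in $s$ (a product of two locally bounded, strongly continuous semigroup factors and a bounded operator). The fundamental theorem of calculus for Banach-space-valued functions then gives $f(t)-f(0)=\int_0^t T(t-s)\,B\,S(s)\,x\,ds$, and since $f(t)=S(t)x$ and $f(0)=T(t)x$, this is precisely \eqref{Duhamel_eq} for $x\in D$.

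Finally, for general $x\in X$ and fixed $t$, both sides of \eqref{Duhamel_eq} depend boundedly and linearly on $x$: the quantities $\|T(t)\|$ and $M_t:=\sup_{s\in[0,t]}\|T(t-s)\|\,\|S(s)\|$ are finite by local norm-boundedness, so the integral term is bounded by $t\,\|B\|\,M_t\,\|x\|$. Since $D$ is dense in $X$ and the identity holds on $D$, it extends to all of $X$ by continuity. The only step requiring genuine care — and hence the main, though routine, obstacle — is the product-rule limit above: one has only strong, not norm, continuity of $T$ and $S$, so the convergence of $T(t-s-h)\,\tfrac{S(s+h)x-S(s)x}{h}$ must be justified through local norm-boundedness of strongly continuous semigroups. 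Everything else (identifying $D(C)$ with $D(A)$, the commutation $A\,T(r)=T(r)\,A$ on $D$, continuity of $f'$, and the density argument) is standard; alternatively, one may instead check directly that the right-hand side of \eqref{Duhamel_eq} defines a strongly continuous semigroup with generator $A+B$ and appeal to uniqueness.
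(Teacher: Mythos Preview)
Your proof is correct and is essentially the standard argument one finds in the literature (including in Engel--Nagel): differentiate $s\mapsto T(t-s)S(s)x$ on the common domain, use $C-A=B$, integrate, and extend by density. Note, however, that the paper does not actually prove this statement; it merely quotes it as \cite[Corollary~III.1.7]{EngelNagel} and uses it as a black box, so there is no ``paper's own proof'' to compare against beyond the cited reference.
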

\begin{theorem}\emph{\cite[Theorem III.1.10]{EngelNagel}}
    Under the same assumptions, we have
    \begin{equation}\label{Dyson_series_eq}
        S(t)=\sum_{n=0}^\infty S_n(t),
    \end{equation}
    where $S_0(t)=T(t)$ and
    \begin{equation}\label{Dyson_eq2}
        S_{n+1}(t)=\int_0^t T(t-s)\,B\,S_n(s)\,ds.
    \end{equation}
    Here, the series on the right-hand side of \eqref{Dyson_series_eq} converges in operator norm.
\end{theorem}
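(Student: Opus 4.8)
The plan is to derive the series by iterating the Duhamel / variation-of-constants formula \eqref{Duhamel_eq} and estimating the resulting remainder term. First I would record the standard fact that a strongly continuous semigroup is exponentially bounded: there exist $M \ge 1$ and $\omega \in \mathbb{R}$ with $\|T(s)\| \le M e^{\omega s}$ for all $s \ge 0$ (this follows from strong continuity together with the uniform boundedness principle), and the same holds for $(S(t))_{t \ge 0}$. In particular, for every fixed $t_0 > 0$ there is a constant $K = K(t_0)$ with $\|T(s)\| \le K$ and $\|S(s)\| \le K$ for all $s \in [0, t_0]$, while $\|B\| < \infty$ by hypothesis. Strong continuity of $T$ and $S$ together with boundedness of $B$ also guarantees that every operator-valued integrand appearing below depends continuously (in the strong operator topology) on its parameters, so the Bochner integrals are well defined and Fubini's theorem applies to the iterated ones.

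Next, set $R_0(t) := S(t)$ and $R_{k+1}(t) := \int_0^t T(t-s)\,B\,R_k(s)\,ds$ for $k \ge 0$, so that $R_k$ is obtained by applying $k$ times to $S$ the Duhamel convolution $R(\cdot) \mapsto \int_0^{\cdot} T(\cdot - s)\,B\,R(s)\,ds$. A straightforward induction on $n$ gives $S(t) = \sum_{k=0}^{n} S_k(t) + R_{n+1}(t)$: the case $n=0$ is exactly \eqref{Duhamel_eq} together with $S_0(t)=T(t)$ and $S_1 = R_1$, and for the step one inserts \eqref{Duhamel_eq} into the definition of $R_{n+1}$ and invokes \eqref{Dyson_eq2} to rewrite $R_{n+1}$ as $S_{n+1}+R_{n+2}$. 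Unwinding the definitions, $R_{n+1}(t)$ and $S_k(t)$ are iterated integrals over the simplices $\{t \ge s_1 \ge \dots \ge s_{n+1} \ge 0\}$ and $\{t \ge s_1 \ge \dots \ge s_k \ge 0\}$ of products $T(t-s_1)\,B\,T(s_1-s_2)\,B\cdots$ ending in $B\,S(s_{n+1})$, respectively $T(s_k)$. Since the simplex in $\mathbb{R}^m$ has volume $t^m/m!$, on $[0,t_0]$ this yields
\[
\|R_{n+1}(t)\| \le K^{\,n+2}\,\|B\|^{\,n+1}\,\frac{t^{n+1}}{(n+1)!}, \qquad \|S_k(t)\| \le K^{\,k+1}\,\|B\|^{\,k}\,\frac{t^{k}}{k!}.
\]
The first bound shows $\|R_{n+1}(t)\| \to 0$ as $n \to \infty$, uniformly on $[0,t_0]$; the second shows $\sum_{k \ge 0}\|S_k(t)\| \le K\,e^{K\|B\| t} < \infty$, so $\sum_k S_k(t)$ converges in operator norm, uniformly on compact intervals. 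Letting $n \to \infty$ in $S(t) = \sum_{k=0}^{n}S_k(t) + R_{n+1}(t)$ proves \eqref{Dyson_series_eq}.

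The only point requiring genuine care — and the one I expect to be the main (if modest) obstacle — is making the iteration of \eqref{Duhamel_eq} and the reorganization of the multiple integrals over simplices fully rigorous: one needs strong measurability and integrability of the operator-valued integrands (provided by strong continuity of $T,S$ and boundedness of $B$) in order to apply Fubini and to justify the inductive step rewriting $R_{n+1}$ as $S_{n+1}+R_{n+2}$. An alternative that sidesteps some of this bookkeeping is to define $\widetilde{S}(t) := \sum_{k \ge 0} S_k(t)$ directly from the norm bound above, use that bound to interchange summation and integration and verify that $\widetilde{S}$ satisfies the same integral equation \eqref{Duhamel_eq} as $S$, and then conclude $S \equiv \widetilde{S}$ from Grönwall's inequality applied to $W := S - \widetilde{S}$, which satisfies $\|W(t)\| \le K\|B\|\int_0^t \|W(s)\|\,ds$. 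I would present whichever of the two arguments comes out shorter.
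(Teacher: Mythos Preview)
The paper does not give its own proof of this theorem: it is quoted verbatim as \cite[Theorem III.1.10]{EngelNagel} and used as a black box. There is therefore nothing in the paper to compare your argument against. That said, your proposal is essentially the standard Dyson--Phillips argument found in Engel--Nagel: iterate the Duhamel identity, recognize the terms and remainder as iterated integrals over simplices of volume $t^m/m!$, and use the resulting factorial decay to get norm convergence and vanishing of the remainder. Your bounds and the telescoping identity $S(t)=\sum_{k=0}^{n}S_k(t)+R_{n+1}(t)$ are correct; the only cosmetic point is that the inductive step is most cleanly phrased as proving $R_n=S_n+R_{n+1}$ (base case $n=0$ is Duhamel, and the step follows by applying $\int_0^t T(t-s)B(\cdot)\,ds$ to both sides), rather than ``inserting \eqref{Duhamel_eq} into the definition of $R_{n+1}$''. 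Your alternative via Gr\"onwall is also fine and equally standard.
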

In our case, given a potential $V:\mathbb{N}\to\mathbb{R}$, we apply the above with $A=-\,iV$ and $B=-\,i\Delta_\lambda$, so that $T(t)=e^{-itV}$ and $S(t)=e^{-it(V+\Delta_\lambda)}$. Plugging this into \eqref{Dyson_series_eq} and expanding with \eqref{Dyson_eq2}, we obtain
\begin{equation}\label{Dyson_Schrodinger_eq}
    e^{-itH_{V,\lambda}}=e^{-itV}+\sum_{m=1}^\infty (-i)^m\!\!\int_{0<s_1<\cdots<s_m<t}\!\!
    e^{-i(t-s_m)V}\,\Delta_\lambda\, e^{-i(s_m-s_{m-1})V}\cdots \Delta_\lambda\, e^{-is_1V}\,ds.
\end{equation}

\subsection{Pointwise limits of (possibly infinite) potentials}
The following lemma states that the Fourier transform preserves pointwise limits when the limit potential has a single infinite barrier.

\begin{lemma}\label{lemma_pw_conv}
Let $V:\mathbb{N}\to\overline{\mathbb{R}}$ take the value $\infty$ at exactly one site $n_0$, and let $V_j:\mathbb{N}\to\mathbb{R}$ be finite-valued with $V_j(n)\to V(n)$ for every $n$. Set $H_j:=H_{V_j}$ and $H:=H_V$. Then for every fixed $\lambda\in\mathbb{R}$ and $t\in\mathbb{R}$,
\begin{center}
$\bigl\langle \delta_1, e^{-it H_{j,\lambda}}\delta_1\bigr\rangle\;\xrightarrow[j\to\infty]{}\;\bigl\langle \delta_1, e^{-it H_{\lambda}}\delta_1\bigr\rangle.$
\end{center}
\end{lemma}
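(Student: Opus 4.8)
The plan is to realize the single infinite barrier as a strong resolvent limit and then upgrade to convergence of the unitary groups applied to $\delta_1$. Concretely, write $n_0$ for the unique site where $V(n_0)=\infty$, and recall that $H_\lambda = H_{V,\lambda}$ acts on the subspace $\mathcal H_0 := \{\psi \in \ell^2(\mathbb N): \psi(n_0)=0\}$, where it is self-adjoint. The first step is to show that $H_{j,\lambda} \to H_\lambda \oplus (\text{something on } \mathbb C\delta_{n_0})$ in the strong resolvent sense on all of $\ell^2(\mathbb N)$ — or, more cleanly, that $(H_{j,\lambda}+i)^{-1} \to (H_\lambda + i)^{-1} P_0$ strongly, where $P_0$ is the orthogonal projection onto $\mathcal H_0$. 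This is a standard fact about Schrödinger operators with a potential tending to $+\infty$ at one site: the large value $V_j(n_0)$ energetically suppresses the component at $n_0$, decoupling that site in the limit. I would prove it by the second resolvent identity together with an a priori bound: if $\psi_j := (H_{j,\lambda}+i)^{-1}\varphi$, then $\langle \psi_j, (H_{j,\lambda}+i)\psi_j\rangle = \langle \psi_j,\varphi\rangle$ forces $V_j(n_0)|\psi_j(n_0)|^2 \le \|\psi_j\|\|\varphi\| \le \|\varphi\|^2$, so $|\psi_j(n_0)|^2 \le \|\varphi\|^2 / V_j(n_0) \to 0$; combining this with the resolvent identity comparing $H_{j,\lambda}$ to the decoupled operator (which differs only by a rank-one term at $n_0$ of bounded norm away from the large part) yields strong resolvent convergence.

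Given strong resolvent convergence, the second step is routine: strong resolvent convergence implies that $e^{-itH_{j,\lambda}} \to e^{-itH_\lambda}P_0$ strongly for every $t\in\mathbb R$, uniformly for $t$ in compact sets (this is the Trotter–Kato type consequence of strong resolvent convergence; see e.g.\ \cite[Theorem VIII.21]{Kato} or the analogous statement in \cite{EngelNagel}). Applying this to the vector $\delta_1$, and using that $P_0\delta_1 = \delta_1$ since $n_0 \ne 1$ (indeed $n_0 = n_k$ for some $k$, and $n_1 > 1$), we get
\[
e^{-itH_{j,\lambda}}\delta_1 \;\longrightarrow\; e^{-itH_\lambda}P_0\delta_1 \;=\; e^{-itH_\lambda}\delta_1
\]
in $\ell^2(\mathbb N)$. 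Taking the inner product with $\delta_1$ (continuous functional) gives exactly the claimed convergence $\langle \delta_1, e^{-itH_{j,\lambda}}\delta_1\rangle \to \langle \delta_1, e^{-itH_\lambda}\delta_1\rangle$.

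An alternative, more hands-on route that avoids citing abstract resolvent machinery would be to use the Dyson expansion \eqref{Dyson_Schrodinger_eq} directly. One conjugates so that the site $n_0$ carries a large phase $e^{-it V_j(n_0)}$; in each term of the series the factor $e^{-i(s_{k+1}-s_k)V_j}$ restricted to coordinate $n_0$ oscillates rapidly, and a stationary-phase / Riemann–Lebesgue argument on the iterated time integrals shows that every path which visits $n_0$ contributes $o(1)$ as $j\to\infty$, while paths avoiding $n_0$ reproduce the Dyson series for $e^{-itH_\lambda}$ on $\mathcal H_0$. The main obstacle in that approach is controlling the interchange of the $j\to\infty$ limit with the infinite sum over $m$; one needs the operator-norm convergence of the Dyson series (guaranteed by \eqref{Dyson_series_eq}) to be uniform in $j$, which holds here because $\|\Delta_\lambda\|$ is bounded uniformly in $j$ and the tail bound $\sum_{m>M} \|\Delta_\lambda\|^m t^m / m!$ is $j$-independent. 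I expect the resolvent-convergence route to be the cleaner of the two, so that is the one I would write up, with the a priori bound $|\psi_j(n_0)|^2 \lesssim 1/V_j(n_0)$ being the only genuinely non-formal ingredient.
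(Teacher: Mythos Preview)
Your approach is essentially the same as the paper's: both arguments establish strong resolvent convergence $(H_{j,\lambda}-i)^{-1}\to (H_\lambda-i)^{-1}\oplus 0$ on $\ell^2(\mathbb N)$ and then invoke the standard passage to strong convergence of the unitary groups, applied to $\delta_1\in\mathcal H_0$. The only difference is that the paper outsources the resolvent step to Proposition~\ref{KPS_prop} (from \cite{KPS}), whereas you sketch a direct proof.

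One small caution about your sketch: the quadratic-form estimate $V_j(n_0)\,|\psi_j(n_0)|^2\le\|\psi_j\|\,\|\varphi\|$ tacitly assumes that the remainder of $\langle\psi_j,H_{j,\lambda}\psi_j\rangle$ is bounded below uniformly in $j$, which is not given by the hypotheses (the lemma imposes no uniform lower bound on $V_j$ away from $n_0$). A more robust route to the same conclusion is to read the resolvent equation at the single coordinate $n_0$: from $(H_{j,\lambda}+i)\psi_j=\varphi$ one gets $(V_j(n_0)+i)\psi_j(n_0)=\varphi(n_0)-\psi_j(n_0-1)-\psi_j(n_0+1)$, hence $|\psi_j(n_0)|\le 3\|\varphi\|/|V_j(n_0)+i|\to 0$. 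You should also be aware that the subsequent ``resolvent identity'' step still has to absorb the pointwise convergence of $V_j$ at the remaining sites (where no uniform bound is assumed either); this is exactly the content of Proposition~\ref{KPS_prop}, so in the end you are reproving that result rather than bypassing it.
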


We will need the following, obtained in \cite{KPS}.

\begin{proposition}\label{KPS_prop}
Let $V:\mathbb{N}\to\overline{\mathbb{R}}$ and denote $I(V):=\{n\in\mathbb{N}:V(n)=\infty\}$. Write $\ell^2(I(V))$ for the closed subspace supported on $I(V)$ and $\ell^2(I(V)^c)$ for its orthogonal complement. Define
\begin{center}
$(H_V-i)^{-1}\;:=\;\bigl((H_V|_{\ell^2(I(V)^c)}-i)^{-1}\bigr)\ \oplus\ \mathbf{0}_{\ell^2(I(V))}.$
\end{center}
If $V_j\to V$ pointwise, then
\begin{center}
$\left(H_{V_j}-i\right)^{-1}\longrightarrow\left(H_V-i\right)^{-1}$
\end{center}
strongly on $\ell^2(\mathbb{N})$.
\end{proposition}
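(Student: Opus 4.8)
The plan is to prove the strong resolvent convergence by a weak–compactness argument that makes the Dirichlet condition at the infinite sites emerge from the hypothesis $V_j(n)\to\infty$ for $n\in I(V)$. Write $P$ and $Q$ for the orthogonal projections of $\ell^2(\mathbb{N})$ onto $\ell^2(I(V)^c)$ and $\ell^2(I(V))$, respectively. Each $H_{V_j}=\Delta+V_j$ is essentially self-adjoint on the finitely supported sequences $c_{00}$ (the off-diagonal entries equal $1$, so Carleman's condition holds; cf.\ \cite{Ber}), and likewise $H_V|_{\ell^2(I(V)^c)}$ is self-adjoint, being a direct sum over the connected components of $I(V)^c$ of discrete Schr\"odinger operators with Dirichlet conditions at the endpoints. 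In particular $\|(H_{V_j}-i)^{-1}\|\le 1$ for all $j$ and $\|(H_V-i)^{-1}\|\le 1$, so it suffices to prove $(H_{V_j}-i)^{-1}f\to(H_V-i)^{-1}f$ for $f$ in the dense set $c_{00}$; writing $f=Pf+Qf$ and using linearity (note $c_{00}\subseteq\mathrm{dom}(H_{V_j})$ since $V_j$ is finite-valued), this reduces to the statements \textbf{(A)} $(H_{V_j}-i)^{-1}\delta_n\to 0$ for every $n\in I(V)$, and \textbf{(B)} $(H_{V_j}-i)^{-1}g\to(H_V|_{\ell^2(I(V)^c)}-i)^{-1}g$ for every $g\in c_{00}\cap\ell^2(I(V)^c)$.

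For \textbf{(A)}, fix $n\in I(V)$ and let $\nu_j$ be the spectral measure of $\delta_n$ with respect to $H_{V_j}$; it is a probability measure with $\int x\,d\nu_j(x)=(H_{V_j})_{nn}=V_j(n)$ and $\int x^2\,d\nu_j(x)=\|H_{V_j}\delta_n\|^2\le V_j(n)^2+2$, hence variance at most $2$. By Chebyshev's inequality $\nu_j$ concentrates near $V_j(n)\to\infty$, so $\langle\delta_n,(H_{V_j}-i)^{-1}\delta_n\rangle=\int(x-i)^{-1}\,d\nu_j(x)\to 0$ (and likewise with $+i$). Since $\|(H_{V_j}-i)^{-1}\delta_n\|^2=\mathrm{Im}\langle\delta_n,(H_{V_j}-i)^{-1}\delta_n\rangle\le|\langle\delta_n,(H_{V_j}-i)^{-1}\delta_n\rangle|$, this yields $(H_{V_j}-i)^{-1}\delta_n\to 0$ in $\ell^2(\mathbb{N})$. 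The same computation shows that for any $h\in c_{00}$ the vector $u_j:=(H_{V_j}-i)^{-1}h$ satisfies $u_j(n)=\langle(H_{V_j}+i)^{-1}\delta_n,h\rangle\to 0$ for each $n\in I(V)$; when $I(V)$ is finite this gives $Qu_j\to 0$ in $\ell^2(\mathbb{N})$.

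For \textbf{(B)}, fix $g\in c_{00}\cap\ell^2(I(V)^c)$ and set $u_j:=(H_{V_j}-i)^{-1}g$, a bounded sequence. Pass to a weakly convergent subsequence $u_{j_k}\rightharpoonup w$; by the previous paragraph $Qu_{j_k}\to 0$, so $w\in\ell^2(I(V)^c)$. For any $\phi\in c_{00}\cap\ell^2(I(V)^c)$, the identity $(H_{V_j}-i)u_j=g$ gives $\langle(H_{V_j}+i)\phi,u_j\rangle=\langle\phi,g\rangle$; letting $k\to\infty$, the $\Delta\phi$ and $i\phi$ contributions converge by weak convergence (using $w\in\ell^2(I(V)^c)$ to replace $\Delta$ by the Dirichlet Laplacian on $\ell^2(I(V)^c)$), while the potential contribution converges because $\phi$ is supported on finitely many sites of $I(V)^c$, where $V_{j_k}\to V$, and $u_{j_k}\to w$ pointwise. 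One obtains $\langle(H_V|_{\ell^2(I(V)^c)}+i)\phi,w\rangle=\langle\phi,g\rangle$ for all $\phi$ in $c_{00}\cap\ell^2(I(V)^c)$, which is a core for $H_V|_{\ell^2(I(V)^c)}$; hence $w\in\mathrm{dom}(H_V|_{\ell^2(I(V)^c)})$ with $(H_V|_{\ell^2(I(V)^c)}-i)w=g$, i.e.\ $w=(H_V|_{\ell^2(I(V)^c)}-i)^{-1}g=:u$. Since every weak subsequential limit equals $u$, we get $u_j\rightharpoonup u$; finally, taking imaginary parts of $\langle u_j,(H_{V_j}-i)u_j\rangle=\langle u_j,g\rangle$ gives $\|u_j\|^2=-\mathrm{Im}\langle u_j,g\rangle\to-\mathrm{Im}\langle u,g\rangle=\|u\|^2$, so the weak convergence upgrades to $u_j\to u$ in norm. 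Together with \textbf{(A)} this proves the proposition.

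The main obstacle is conceptual rather than computational: $\mathrm{dom}(H_{V_j})$ and $\mathrm{dom}(H_V)$ are genuinely different and admit no common core (finitely supported sequences vanishing on $I(V)$ are not dense in $\ell^2(\mathbb{N})$), so the standard core criterion for strong resolvent convergence is unavailable, and one must make the emerging Dirichlet condition explicit — which is exactly what the variance bound in \textbf{(A)} does, showing that the resolvent mass at a barrier site is $O(1/V_j(n))$. The two loose ends above — tightness of $Qu_j$ when $I(V)$ is infinite, and the lack of a uniform lower bound on the $V_j$ — are dealt with in \cite{KPS} via monotone convergence / lower semicontinuity of quadratic forms (capping $V$ and $V_j$ at level $N$ and letting $N\to\infty$); in all applications in this paper $I(V)$ is finite and the potentials are bounded below, so the elementary argument above suffices.
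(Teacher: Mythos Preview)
The paper does not actually prove this proposition: it is quoted from \cite{KPS}, with a remark that the argument there (for $\ell^2(\mathbb{Z}^d)$, via the Cayley transform and strong convergence of generalized resolvents) carries over to the half line without change. So there is no proof in the paper to compare against; what can be said is that your approach---a direct resolvent argument---is different in flavor from the Cayley-transform route alluded to.

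Your argument is correct and self-contained. The variance bound in \textbf{(A)} is a clean way to make the Dirichlet condition at the barrier sites emerge (the key identity $\|(H_{V_j}-i)^{-1}\delta_n\|^2=\mathrm{Im}\,\langle\delta_n,(H_{V_j}-i)^{-1}\delta_n\rangle$ does all the work), and the weak-compactness plus norm-upgrade in \textbf{(B)} is standard and correctly executed: the limiting identity on the core $c_{00}\cap\ell^2(I(V)^c)$ pins down $w$, and $\|u_j\|^2=-\mathrm{Im}\,\langle u_j,g\rangle\to\|u\|^2$ upgrades weak to strong convergence. One remark on your self-identified ``loose ends'': the norm convergence $Qu_j\to 0$ for infinite $I(V)$ is in fact not needed in \textbf{(B)}. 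All you use there is $w\in\ell^2(I(V)^c)$, and that already follows from the pointwise convergence $u_{j_k}(n)\to 0$ for $n\in I(V)$ that you established (weak convergence in $\ell^2$ implies coordinatewise convergence, hence $w(n)=0$ on $I(V)$). So your elementary argument actually handles the general case, at least under the standing assumption that the $V_j$ are finite-valued---which is the only situation invoked in the paper.
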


\begin{remark}
Proposition~\ref{KPS_prop} is proved in \cite{KPS} for the Cayley transform of Schr\"odinger operators on $\ell^2(\mathbb{Z}^d)$. Their argument proceeds via strong convergence of generalized resolvents, which carries over to $\ell^2(\mathbb{N})$ without change.
\end{remark}

\begin{proof}[Proof of Lemma \ref{lemma_pw_conv}]
Fix $\lambda\in\mathbb{R}$ and $t\in\mathbb{R}$. For each $j$ define the finite-valued potentials
\begin{center}
$V_{j,\lambda}(n):=V_j(n)+\lambda\langle\delta_1,\cdot\rangle\delta_1,\qquad V_{\lambda}(n):=V(n)+\lambda\langle\delta_1,\cdot\rangle\delta_1.$
\end{center}
Then $V_{j,\lambda}\to V_{\lambda}$ pointwise and $H_{V_{j,\lambda}}=H_{j,\lambda}$, $H_{V_\lambda}=H_\lambda$. By Proposition~\ref{KPS_prop},
\begin{center}
$(H_{j,\lambda}-i)^{-1}\ \longrightarrow\ (H_{\lambda}-i)^{-1}$
\end{center}
strongly. In the single-barrier case, $(H_{\lambda}-i)^{-1}$ is the direct sum of the resolvents of the two decoupled restrictions (on $\{1,\dots,n_0-1\}$ and on $\{n_0+1,n_0+2,\dots\}$) together with $0$ on the span of $\delta_{n_0}$, so the generalized resolvent agrees with the natural decoupled resolvent. Hence, $H_{j,\lambda}\rightarrow{\ }H_\lambda$ in the strong resolvent sense. Strong resolvent convergence implies strong convergence of the unitary groups for each fixed $t$. Hence for every $\psi\in\ell^2(\mathbb{N}\setminus\left\{n_0\right\})$,
\begin{center}
$e^{-itH_{j,\lambda}}\psi\ \longrightarrow\ e^{-itH_\lambda}\psi.$
\end{center}
Taking $\psi=\delta_1$ yields
\begin{center}
$\bigl\langle \delta_1, e^{-it H_{j,\lambda}}\delta_1\bigr\rangle\ \longrightarrow\ \bigl\langle \delta_1, e^{-it H_{\lambda}}\delta_1\bigr\rangle,$
\end{center}
as required.
\end{proof}

\subsection{Fourier matrix element depends only on a finite prefix}
In this subsection, we prove that if two potentials are identical on a long prefix, then the corresponding Fourier transforms must be close for certain (early) times.

\begin{lemma}\label{lem:prefix_freezing}
Let $V,W:\mathbb{N}\to\mathbb{R}$ and set
\[
H_{V,\lambda}:=\Delta+\lambda\langle\delta_1,\cdot\rangle\delta_1+V,\qquad
H_{W,\lambda}:=\Delta+\lambda\langle\delta_1,\cdot\rangle\delta_1+W.
\]
Fix $t\in\mathbb{R}$ and $M\ge 0$. If $V(n)=W(n)$ for all $n\le N$, then for every $\lambda\in[-M,M]$,
\begin{equation}\label{eq:freeze_bound}
\sup_{|\lambda|\le M}\,
\bigl|\langle\delta_1,e^{-itH_{V,\lambda}}\delta_1\rangle-\langle\delta_1,e^{-itH_{W,\lambda}}\delta_1\rangle\bigr|
\;\le\; 2\sum_{m\ge N}\frac{\bigl((2+M)|t|\bigr)^m}{m!},
\end{equation}
and the right-hand side tends to $0$ as $N\to\infty$.
\end{lemma}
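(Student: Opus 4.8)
The plan is to expand both evolutions $e^{-itH_{V,\lambda}}$ and $e^{-itH_{W,\lambda}}$ via the Dyson series \eqref{Dyson_Schrodinger_eq} and to exploit the finite speed of propagation of $\Delta_\lambda$: after pairing with $\delta_1$, all terms of order $m<N$ in the two expansions turn out to be literally equal, and the remaining tail $(m\ge N)$ is controlled by operator-norm estimates that are uniform in $\lambda\in[-M,M]$. It suffices to treat $t\ge 0$: for $t<0$, self-adjointness gives $\langle\delta_1,e^{-itH_{V,\lambda}}\delta_1\rangle=\overline{\langle\delta_1,e^{-i|t|H_{V,\lambda}}\delta_1\rangle}$, and likewise for $W$, so the left-hand side of \eqref{eq:freeze_bound} is unchanged upon replacing $t$ by $|t|$.

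Fix $t\ge 0$. Since $\Delta_\lambda$ is bounded, \eqref{Dyson_Schrodinger_eq} holds with the series converging in operator norm, so one may pair it with $\delta_1$ term by term. For $m\ge 1$ set
\[
a^V_m(s_1,\dots,s_m):=\bigl\langle\delta_1,\ e^{-i(t-s_m)V}\,\Delta_\lambda\,e^{-i(s_m-s_{m-1})V}\,\Delta_\lambda\cdots\Delta_\lambda\,e^{-is_1V}\,\delta_1\bigr\rangle,
\]
and $a^V_0:=\langle\delta_1,e^{-itV}\delta_1\rangle=e^{-itV(1)}$, with $a^W_m$ defined analogously. Then
\[
\langle\delta_1,e^{-itH_{V,\lambda}}\delta_1\rangle-\langle\delta_1,e^{-itH_{W,\lambda}}\delta_1\rangle=\sum_{m\ge 0}(-i)^m\!\!\int_{0<s_1<\cdots<s_m<t}\!\!\bigl(a^V_m-a^W_m\bigr)\,ds,
\]
the $m=0$ term being simply $a^V_0-a^W_0$.

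The key observation is that $a^V_m$ depends only on the values $V(1),\dots,V(m+1)$. Reading the product defining $a^V_m$ from right to left: $e^{-is_1V}$ scales $\delta_1$, and each factor $\Delta_\lambda$ maps any vector supported in $\{1,\dots,k\}$ into a vector supported in $\{1,\dots,k+1\}$, since $\Delta_\lambda\delta_1=\lambda\delta_1+\delta_2$ and $\Delta_\lambda\delta_n=\delta_{n-1}+\delta_{n+1}$ for $n\ge 2$. Hence after $k$ of the $m$ factors $\Delta_\lambda$ have acted, the running vector is supported in $\{1,\dots,k+1\}$, so every diagonal factor $e^{-i(\cdot)V}$ occurring in the product involves only $V(1),\dots,V(m+1)$ (and $a^V_0$ depends only on $V(1)$). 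Since $V\equiv W$ on $\{1,\dots,N\}$, it follows that $a^V_m=a^W_m$ whenever $m+1\le N$, i.e.\ whenever $m\le N-1$; thus only the terms with $m\ge N$ survive in the displayed difference.

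It remains to bound those terms. As $V$ is real valued, each $e^{-isV}$ is unitary, and for $|\lambda|\le M$ one has $\|\Delta_\lambda\|\le\|\Delta\|+|\lambda|\le 2+M$; therefore $|a^V_m(s)|\le(2+M)^m$, uniformly in $s$ and in $|\lambda|\le M$, and the same bound holds for $a^W_m$. Since the simplex $\{0<s_1<\cdots<s_m<t\}$ has volume $t^m/m!=|t|^m/m!$, summing the surviving terms gives, for every $|\lambda|\le M$,
\[
\bigl|\langle\delta_1,e^{-itH_{V,\lambda}}\delta_1\rangle-\langle\delta_1,e^{-itH_{W,\lambda}}\delta_1\rangle\bigr|\ \le\ \sum_{m\ge N}2\,\frac{\bigl((2+M)|t|\bigr)^m}{m!},
\]
which is \eqref{eq:freeze_bound}; being the tail of the convergent series for $2e^{(2+M)|t|}$, the right-hand side tends to $0$ as $N\to\infty$. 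I do not expect a serious obstacle: the argument is a soft combination of the Dyson expansion with the locality of $\Delta_\lambda$. The one step warranting care is the finite-propagation claim — making precise that $a^V_m$ genuinely depends on only the first $m+1$ values of the potential — together with the routine reduction to $t\ge 0$ and the bookkeeping of constants.
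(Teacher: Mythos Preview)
Your proof is correct and follows essentially the same approach as the paper: expand via the Dyson series \eqref{Dyson_Schrodinger_eq}, use the locality of $\Delta_\lambda$ to see that the $m$-th term depends only on $V(1),\dots,V(m+1)$, and bound the tail $m\ge N$ by the simplex volume times $\|\Delta_\lambda\|^m\le(2+M)^m$. Your write-up is in fact slightly more explicit than the paper's on the finite-propagation step and on the reduction to $t\ge 0$.
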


\begin{proof}
By \eqref{Dyson_Schrodinger_eq},
\begin{equation}\label{eq:dyson-phi}
e^{-itH_{V,\lambda}}
= e^{-itV}
+ \sum_{m=1}^{\infty} (-i)^m \!\!\int_{0<s_1<\cdots<s_m<t}\!\!
e^{-i(t-s_m)V} \Delta_\lambda e^{-i(s_m-s_{m-1})V}\cdots \Delta_\lambda e^{-is_1V}\,ds,
\end{equation}
and the $m$-th term has operator norm $\le \frac{(\|\Delta_\lambda\||t|)^m}{m!}$. Since $e^{-isV}$ is diagonal with diagonal entries of modulus $1$ and $\Delta_\lambda$ is a rank-one perturbation of the tridiagonal $\Delta$, the $m$-th term in the sum
$\langle\delta_1,e^{-itH_{V,\lambda}}\delta_1\rangle$ depends only on $V(1),\dots,V(m+1)$. If $V=W$ on $\{1,\dots,N\}$, then all terms with $m\le N-1$ (including $m=0$) coincide for $V$ and $W$; the difference is the tail $m\ge N$ for the two series. Using $\|\Delta_\lambda\|\le 2+|\lambda|$,
\[
\bigl|\langle\delta_1,e^{-itH_{V,\lambda}}\delta_1\rangle-\langle\delta_1,e^{-itH_{W,\lambda}}\delta_1\rangle\bigr|
\;\le\; 2\sum_{m\ge N}\frac{\bigl((2+|\lambda|)|t|\bigr)^m}{m!}
\;\le\; 2\sum_{m\ge N}\frac{\bigl((2+M)|t|\bigr)^m}{m!},
\]
which is \eqref{eq:freeze_bound}.
\end{proof}

Lemma~\ref{lem:prefix_freezing} has the following immediate corollary.
\begin{corollary}\label{prefix_cor}
    Let $V:\mathbb{N}\to\mathbb{R}$, and let $M,T>0$. Then for every $\varepsilon>0$ there exists $N\in\mathbb{N}$ such that for every $W:\mathbb{N}\to\mathbb{R}$ which is equal to $V$ on $\{1,\ldots,N\}$, for every $\lambda\in[-M,M]$ and for every $t\in(0,T)$,
    \[
        \left|\langle\delta_1,e^{-itH_{V,\lambda}}\delta_1\rangle-\langle\delta_1,e^{-itH_{W,\lambda}}\delta_1\rangle\right|<\varepsilon.
    \]
\end{corollary}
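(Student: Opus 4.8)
The plan is to deduce this immediately from the quantitative estimate \eqref{eq:freeze_bound} in Lemma~\ref{lem:prefix_freezing}. The one point to observe is that the right-hand side of \eqref{eq:freeze_bound}, namely $2\sum_{m\ge N}\frac{\bigl((2+M)|t|\bigr)^m}{m!}$, is monotone increasing in $|t|$; hence on the range $t\in(0,T)$ it is dominated by the $t$-independent quantity $2\sum_{m\ge N}\frac{\bigl((2+M)T\bigr)^m}{m!}$, which is also manifestly $\lambda$-independent. This turns the already $\lambda$-uniform bound of the lemma into one that is uniform in both $\lambda\in[-M,M]$ and $t\in(0,T)$.

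Concretely, given $V$, $M$, $T$ and $\varepsilon>0$, I would first use $\sum_{m=0}^{\infty}\frac{((2+M)T)^m}{m!}=e^{(2+M)T}<\infty$ to choose $N\in\mathbb{N}$ with $2\sum_{m\ge N}\frac{((2+M)T)^m}{m!}<\varepsilon$. Then for any $W:\mathbb{N}\to\mathbb{R}$ agreeing with $V$ on $\{1,\dots,N\}$, any $\lambda\in[-M,M]$ and any $t\in(0,T)$, Lemma~\ref{lem:prefix_freezing} gives
\[
\bigl|\langle\delta_1,e^{-itH_{V,\lambda}}\delta_1\rangle-\langle\delta_1,e^{-itH_{W,\lambda}}\delta_1\rangle\bigr|
\;\le\; 2\sum_{m\ge N}\frac{((2+M)|t|)^m}{m!}
\;\le\; 2\sum_{m\ge N}\frac{((2+M)T)^m}{m!}\;<\;\varepsilon,
\]
which is exactly the asserted inequality.

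I do not expect any genuine obstacle here: the estimate of Lemma~\ref{lem:prefix_freezing} is already uniform in $\lambda\in[-M,M]$, and replacing $|t|$ by its upper bound $T$ is just monotonicity of the exponential tail. The only substantive input is that the prefix-dependence estimate in the lemma decays in $N$ at a rate governed purely by $M$ and $T$ (the convergence of the Dyson/exponential series), and this is precisely what makes the single choice of $N$ work simultaneously for all admissible $W$, $\lambda$ and $t$.
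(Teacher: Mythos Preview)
Your proposal is correct and is precisely the immediate deduction from Lemma~\ref{lem:prefix_freezing} that the paper has in mind (the paper gives no separate proof, simply calling it an ``immediate corollary''). The only nontrivial observation is the monotonicity in $|t|$ that lets you replace $|t|$ by $T$, which you state explicitly.
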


\subsection{Fourier transform for operators on a finite interval}
The goal of this subsection is to show that for operators on finite intervals and compact subsets of rank-one perturbations, one can find a finite set of times for which the corresponding spectral measures will all be bounded away from zero.
\begin{lemma}\label{lemma_fourier_compact}
Let $H$ be a Schr\"odinger operator on $\ell^2(\{1,\dots,n\})$ and let $\mu$ be the spectral measure of $\delta_1$ with respect to $H$. Then for every $\eta\in(0,1)$ and $m>0$ there exists $t>m$ such that $|\widehat{\mu}(t)|\ge \eta$.
\end{lemma}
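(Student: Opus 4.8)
The plan is to use that on a finite-dimensional space the spectral measure is a finite sum of point masses, so $\widehat{\mu}$ is a trigonometric polynomial, and then to invoke recurrence of a linear flow on a torus (equivalently, almost-periodicity) to return arbitrarily close to the value $\widehat{\mu}(0)=1$ at arbitrarily late times.

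First I would diagonalize $H$: since $H$ is self-adjoint on the finite-dimensional space $\ell^2(\{1,\dots,n\})$, the spectral measure of $\delta_1$ has the form $\mu=\sum_{j=1}^{k}w_j\,\delta_{E_j}$ with distinct real eigenvalues $E_j$ and weights $w_j>0$, where the sum runs only over eigenvalues whose eigenspace is not orthogonal to $\delta_1$, and $\sum_{j=1}^k w_j=\|\delta_1\|^2=1$. Consequently
\[
\widehat{\mu}(t)=\sum_{j=1}^{k}w_j e^{-itE_j},\qquad \widehat{\mu}(0)=1 .
\]

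Next comes the key point. The curve $\gamma(t):=(e^{-itE_1},\dots,e^{-itE_k})$ is a continuous one-parameter subgroup of the compact group $\mathbb{T}^k$, hence recurrent: one has $\overline{\{\gamma(t):t\ge m\}}=\gamma(m)\cdot\overline{\{\gamma(s):s\ge 0\}}$, and $\overline{\{\gamma(s):s\ge 0\}}$ is a closed sub-semigroup of a compact group, hence a closed subgroup $G$ containing the identity $(1,\dots,1)$; therefore $(1,\dots,1)\in\gamma(m)G=\overline{\{\gamma(t):t\ge m\}}$. (Equivalently, one may simply quote that $\widehat{\mu}$ is a Bohr almost-periodic function, so for every $\varepsilon>0$ its set of $\varepsilon$-almost periods is relatively dense, hence meets $(m,\infty)$.) Thus for every $\varepsilon>0$ there is $t>m$ with $|e^{-itE_j}-1|<\varepsilon$ for all $j$, whence
\[
|\widehat{\mu}(t)-1|\le\sum_{j=1}^{k}w_j\,|e^{-itE_j}-1|<\varepsilon .
\]

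Finally, given $\eta\in(0,1)$ I would take $\varepsilon:=1-\eta>0$ and the corresponding $t>m$, obtaining $|\widehat{\mu}(t)|\ge 1-|\widehat{\mu}(t)-1|>1-\varepsilon=\eta$, which is the claim. There is no substantial obstacle here; the only points needing a little care are that the weights $w_j$ are genuinely positive (so only finitely many frequencies occur and $\widehat{\mu}(0)=1$ exactly) and that the recurrence time may be chosen beyond the prescribed threshold $m$ — precisely the relative density of return times for a linear flow on the torus.
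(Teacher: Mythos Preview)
Your proof is correct and follows essentially the same route as the paper: write $\widehat{\mu}(t)=\sum_j w_j e^{-itE_j}$ with $\widehat{\mu}(0)=1$, invoke almost-periodicity (you spell this out via recurrence of the linear flow on $\mathbb{T}^k$, the paper just quotes it) to find $t>m$ with $|\widehat{\mu}(t)-1|<1-\eta$, and conclude. Your version supplies more detail on why the return time can be taken beyond the threshold $m$, but the argument is the same.
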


\begin{proof}
Denote the eigenvalues of $H$ by $E_1,\ldots,E_n$ and the corresponding weights by $w_j\ge 0$ with $\sum_j w_j=1$. Then
\(
\widehat{\mu}(t)=\sum_{j=1}^n w_j e^{-itE_j},
\)
namely $\widehat{\mu}$ is an almost periodic function with $\widehat{\mu}(0)=1$. Almost periodicity implies that for any $\eta\in(0,1)$ and any $m>0$ there exists $t>m$ with $|\widehat{\mu}(t)-1|<1-\eta$, which gives the claim.
\end{proof}
Note that for fixed $V$ and $t\in\mathbb{R}$, the map $\lambda\mapsto \widehat{\mu}_{V,\lambda}(t)$ is continuous on $\mathbb{R}$. Indeed, denoting $B=\langle\delta_1,\cdot\rangle\delta_1$, by \eqref{Duhamel_eq}, we have
\begin{center}
    $e^{-it(H_V+\lambda B)}-e^{-it(H_V+\lambda' B)}
=-i(\lambda-\lambda')\!\int_0^t\! e^{-i(t-s)(H_V+\lambda B)}\,B\,e^{-is(H_V+\lambda' B)}\,ds,$
\end{center}
and so $\bigl\|e^{-it(H_V+\lambda B)}-e^{-it(H_V+\lambda' B)}\bigr\|\le |t|\,|\lambda-\lambda'|\,\|B\|$ and therefore
$|\widehat{\mu}_{V,\lambda}(t)-\widehat{\mu}_{V,\lambda'}(t)|\le |t|\,|\lambda-\lambda'|\,\|B\|$.
\begin{lemma}\label{return_times_compact_interval_lemma}
Let $V:\mathbb{N}\to\overline{\mathbb{R}}$ be such that for some $N$, $V(N)=\infty$. Then:
\begin{enumerate}
    \item For every $\lambda\in\mathbb{R}$ the measure $\mu_{V,\lambda}$ is non-Rajchman.
    \item For every $M,T>0$ there exist finitely many times $t_1,\dots,t_k>T$ with the property that for every $\lambda\in[-M,M]$ there is $i\in\{1,\dots,k\}$ such that $|\widehat{\mu}_{V,\lambda}\left(t_i\right)|\ge \frac{1}{2}$.
\end{enumerate}
\end{lemma}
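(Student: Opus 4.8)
The plan is to reduce both claims to the finite-interval statement of Lemma~\ref{lemma_fourier_compact}. Let $N_0$ denote the smallest site at which $V$ equals $\infty$ (so $N_0\le N$); I will treat the non-degenerate case $N_0\ge 2$, the case $N_0=1$ being vacuous since then $\delta_1$ does not lie in the form domain of $H_V$ and $\mu_{V,\lambda}=0$. The boundary condition $\psi(N_0)=0$ decouples $\ell^2(\{1,\dots,N_0-1\})$ from $\ell^2(\{N_0+1,N_0+2,\dots\})$, so $\mathcal H_1:=\ell^2(\{1,\dots,N_0-1\})$ is invariant under $H_{V,\lambda}$, hence under $e^{-itH_{V,\lambda}}$. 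Since $\delta_1\in\mathcal H_1$ and $V$ is finite on $\{1,\dots,N_0-1\}$, the restriction $H^{(1)}_{V,\lambda}:=H_{V,\lambda}|_{\mathcal H_1}$ is an honest Schr\"odinger operator on $\ell^2(\{1,\dots,N_0-1\})$, and $\widehat\mu_{V,\lambda}(t)=\langle\delta_1,e^{-itH^{(1)}_{V,\lambda}}\delta_1\rangle$ is precisely the quantity to which Lemma~\ref{lemma_fourier_compact} applies.

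For part (1), I would fix $\lambda$ and apply Lemma~\ref{lemma_fourier_compact} to $H^{(1)}_{V,\lambda}$ with $\eta=1/2$ and $m=1,2,3,\dots$, producing times $t_j\to\infty$ with $|\widehat\mu_{V,\lambda}(t_j)|\ge 1/2$; hence $\widehat\mu_{V,\lambda}(t)$ does not tend to $0$ as $|t|\to\infty$, so $\mu_{V,\lambda}$ is non-Rajchman.

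For part (2), fix $M,T>0$. For each $\lambda_0\in[-M,M]$, Lemma~\ref{lemma_fourier_compact} applied to $H^{(1)}_{V,\lambda_0}$ with $\eta=3/4$ and $m=T$ yields a time $\tau(\lambda_0)>T$ with $|\widehat\mu_{V,\lambda_0}(\tau(\lambda_0))|\ge 3/4$. Combining this with the Lipschitz bound $|\widehat\mu_{V,\lambda}(t)-\widehat\mu_{V,\lambda'}(t)|\le|t|\,|\lambda-\lambda'|$ recorded just before the lemma (applied on the finite-dimensional $\mathcal H_1$, where the relevant Duhamel formula for a bounded perturbation is available), one gets $|\widehat\mu_{V,\lambda}(\tau(\lambda_0))|\ge 1/2$ for every $\lambda$ in the open interval $U_{\lambda_0}:=\bigl(\lambda_0-\tfrac{1}{4\tau(\lambda_0)},\,\lambda_0+\tfrac{1}{4\tau(\lambda_0)}\bigr)$. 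The family $\{U_{\lambda_0}\}_{\lambda_0\in[-M,M]}$ is an open cover of the compact set $[-M,M]$; extracting a finite subcover $U_{\lambda_1},\dots,U_{\lambda_k}$ and setting $t_i:=\tau(\lambda_i)>T$ finishes the argument, since any $\lambda\in[-M,M]$ lies in some $U_{\lambda_i}$ and then $|\widehat\mu_{V,\lambda}(t_i)|\ge 1/2$.

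The argument is entirely soft once the decoupling reduction is in place; the only step requiring a moment's thought is upgrading the $\lambda$-dependent return times furnished by Lemma~\ref{lemma_fourier_compact} to a single finite $\lambda$-uniform family, and this is handled cleanly by the Lipschitz continuity of $\lambda\mapsto\widehat\mu_{V,\lambda}(t)$ together with compactness of $[-M,M]$. I do not anticipate a genuine obstacle.
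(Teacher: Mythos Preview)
Your proof is correct and follows the same approach as the paper: reduce to Lemma~\ref{lemma_fourier_compact} via the decoupling at the infinite barrier, and for part~(2) combine the $\lambda$-pointwise return times with continuity of $\lambda\mapsto\widehat\mu_{V,\lambda}(t)$ and compactness of $[-M,M]$ to extract a finite family. The paper is terser about the decoupling and uses only continuity (rather than your explicit Lipschitz radius $1/(4\tau(\lambda_0))$) to produce the neighborhoods $U_\lambda$, but these differences are purely cosmetic.
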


\begin{proof}
\begin{enumerate}
    \item The operator acts on a finite interval, so by Lemma~\ref{lemma_fourier_compact} there are arbitrarily large $t$ with $|\widehat{\mu}_\lambda(t)|$ arbitrarily close to $1$. In particular, $\widehat{\mu}_\lambda$ does not tend to $0$.
    \item For each fixed $\lambda\in[-M,M]$, Lemma~\ref{lemma_fourier_compact} provides $t(\lambda)>T$ with $|\widehat{\mu}_\lambda\!\left(t(\lambda)\right)|\ge \tfrac{3}{4}$. For fixed $t$, the map $\lambda\mapsto \widehat{\mu}_\lambda(t)$ is continuous, hence there is an open neighborhood $U_\lambda\subseteq[-M,M]$ such that $|\widehat{\mu}_{\lambda'}(t(\lambda))|\ge \tfrac12$ for all $\lambda'\in U_\lambda$. Compactness of $[-M,M]$ yields a finite subcover $U_{\lambda_1},\dots,U_{\lambda_k}$; set $t_i:=t(\lambda_i)$.
\end{enumerate}
\end{proof}

\section{Proof of Theorem \ref{main_thm}}

\subsection*{Setup and notation}
Fix $\varepsilon\in(0,\tfrac14)$. for concreteness, we take $\varepsilon=\tfrac{1}{10}$. We construct inductively potentials $V^{(j)}:\mathbb{N}\to\mathbb{R}$ and finite sets of \emph{times} $T_j=\{t_1^{(j)},\dots,t_{k_j}^{(j)}\}\subset (j,\infty)$ with the following property:
\begin{equation}\label{eq:Pj-amp}
\text{for every }\lambda\in[-j,j]\text{ there exists }i=i^{(j)}(\lambda)\in\{1,\dots,k_j\}\text{ with }
\Bigl|\widehat{\mu}_{V^{(j)},\lambda}\!\bigl(t_{i}^{(j)}\bigr)\Bigr|\ \ge\ \frac12-\varepsilon.
\end{equation}
Moreover, $V^{(j)}$ agrees with $V^{(j-1)}$ on an initial block $\{1,\dots,N_j\}$ for some $N_j\uparrow\infty$. As part of the construction, for each $j\ge 1$ we choose $N_{j+1}>N_j$ so that for \emph{every} potential $W$ which agrees with $V^{(j)}$ on $\{1,\dots,N_{j+1}\}$ one has
\begin{equation}\label{eq:freeze-j}
\Bigl|\widehat{\mu}_{W,\lambda}(t)-\widehat{\mu}_{V^{(j)},\lambda}(t)\Bigr|<\varepsilon
\qquad\text{for all }t\in T_1\cup\cdots\cup T_j,\ \ |\lambda|\le j.
\end{equation}
In particular, \eqref{eq:freeze-j} holds for all later $V^{(m)}$ with $m>j$ and for the limit $V$ constructed below, since these agree with $V^{(j)}$ on $\{1,\dots,N_{j+1}\}$.

\medskip
Finally, we will introduce the sequence of barriers $K_j$ which will be the values the limiting potential will take at certain sites, and require that $K_j\uparrow\infty$ and $N_{j+1}-N_j\to\infty$. this will be used to show that the essential spectrum is $\left[-2,2\right]$.

\subsection*{Step 1}
Choose a site $L_1\geq 2$ and set the decoupling potential
\[
\widetilde V^{(1)}(n)=\begin{cases}
\infty,& n=L_1,\\
0,& \text{otherwise}.
\end{cases}
\]
By Lemma~\ref{return_times_compact_interval_lemma} with $M=1$ and $T=1$, there are times $T_1=\{t_1^{(1)},\dots,t_{k_1}^{(1)}\}\subset(1,\infty)$ such that for each $\lambda\in[-1,1]$ there exists $i^{(1)}(\lambda)\in\left\{1,\ldots,k_1\right\}$ with
$|\widehat{\mu}_{\widetilde V^{(1)},\lambda}(t_{i^{(1)}(\lambda)}^{(1)})|\geq \tfrac{1}{2}$.
By Lemma~\ref{lemma_pw_conv} we can approximate $\widetilde{V}^{(1)}$ by setting a large finite value at $L_1$: there exists $K_1$ such that for
\[
V^{(1)}(n)=\begin{cases}
K_1,& n=L_1,\\
0,& \text{otherwise},
\end{cases}
\]
we have
\[
\Bigl|\widehat{\mu}_{V^{(1)},\lambda}\!\bigl(t_{i^{(1)}(\lambda)}^{(1)}\bigr)\Bigr|\ge \frac12-\varepsilon,\qquad \forall\,\lambda\in[-1,1].
\]
Thus \eqref{eq:Pj-amp} holds for $j=1$, and we set $N_1:=L_1-1$.

\subsection*{Step $n+1$}
Assume \eqref{eq:Pj-amp} (with $j=n$) holds for some $n\geq 1$. Applying Corollary~\ref{prefix_cor} with $M=n$ and $T>\max\,T_1\cup\cdots\cup T_n$, we obtain an index $N_{n+1}$ so that \eqref{eq:freeze-j} holds for $j=n$, i.e.,
\[
\Bigl|\widehat{\mu}_{W,\lambda}(t)-\widehat{\mu}_{V^{(n)},\lambda}(t)\Bigr|<\varepsilon
\qquad\text{for all }t\in T_1\cup\cdots\cup T_n,\ \ |\lambda|\le n,
\]
whenever $W$ agrees with $V^{(n)}$ on $\{1,\dots,N_{n+1}\}$. We may choose $N_{n+1}$ as large as we like. In particular, we require $N_{n+1}-N_n\geq n$.

Define the truncated potential
\[
\widetilde V^{(n+1)}(l)=
\begin{cases}
V^{(n)}(l),& l\le N_{n+1},\\[2pt]
\infty,& l=N_{n+1}+1,\\[2pt]
0,& l>N_{n+1}+1.
\end{cases}
\]
By Lemma~\ref{return_times_compact_interval_lemma} with $M=T=n+1$, there are times $T_{n+1}=\{t_1^{(n+1)},\dots,t_{k_{n+1}}^{(n+1)}\}\subset(n+1,\infty)$ such that for each $\lambda\in[-(n+1),n+1]$ there exists $i^{(n+1)}(\lambda)$ with
\[
\Bigl|\widehat{\mu}_{\widetilde V^{(n+1)},\lambda}\!\bigl(t_{i^{(n+1)}(\lambda)}^{(n+1)}\bigr)\Bigr|\ge \frac12.
\]

By Lemma~\ref{lemma_pw_conv} there exists $K_{n+1}$ (which we choose to be larger than $K_n+n$) such that for
\[
V^{(n+1)}(l)=
\begin{cases}
V^{(n)}(l),& n'\le N_{n+1},\\[2pt]
K_{n+1},& l=N_{n+1}+1,\\[2pt]
0,& l>N_{n+1}+1,
\end{cases}
\]
we have, for all $t\in T_1\cup\cdots\cup T_{n+1}$ and all $|\lambda|\le n+1$,
\begin{equation}\label{eq:tilde-approx}
\left|\widehat{\mu}_{V^{(n+1)},\lambda}(t)-\widehat{\mu}_{\widetilde{V}^{(n+1)},\lambda}(t)\right|\leq\varepsilon.
\end{equation}
Combining these together we obtain that $V^{(n+1)}$ satisfies \eqref{eq:Pj-amp} for $j=n+1$ (with the time set $T_{n+1}$), while \eqref{eq:freeze-j} ensures that the values at all earlier times $T_1,\dots,T_n$ are uniformly preserved for all future modifications.

\subsection*{The limiting operator and proof of Theorem \ref{main_thm}}

By construction, for each fixed $n$ the sequence $V^{(j)}(n)$ stabilizes as $j\to\infty$. Define
\begin{center}
$V(n):=\underset{j\to\infty}{\lim}V^{(j)}(n)$.
\end{center}
\begin{proposition}\label{main_prop}
    Consider the rank-one family $\left(H_{V,\lambda}\right)_{\lambda\in\mathbb{R}}$. Then for every $\lambda\in\mathbb{R}$, we have
    \begin{enumerate}
        \item $\sigma_{\text{ess}}\left(H_{V,\lambda}\right)=\left[-2,2\right]$.
        \item $\mu_{V,\lambda}$ is non-Rajchman.
    \end{enumerate}
\end{proposition}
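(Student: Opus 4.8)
\emph{Part 1 (the essential spectrum).} The limiting potential $V$ vanishes off the sites $\{L_1\}\cup\{N_{j+1}+1:j\ge 1\}$, and takes the values $K_1,K_2,\dots$ there. By the construction we have arranged $K_j\uparrow\infty$ and the gaps between consecutive barrier sites (which are essentially $N_{j+1}-N_j$) tending to $\infty$. Thus $V$ is exactly of the sparse form covered by Lemma~\ref{lemma_ess_spec}: the nonzero sites form a strictly increasing sequence with gaps going to infinity, and the potential values at those sites diverge. Lemma~\ref{lemma_ess_spec} then gives $\sigma_{\mathrm{ess}}(H_{V,\lambda})=[-2,2]$ for every $\lambda\in\mathbb{R}$. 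This part is immediate once one checks the bookkeeping on the barrier sites and gaps.

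\emph{Part 2 (non-Rajchman for every $\lambda$).} Fix $\lambda\in\mathbb{R}$ and choose $j$ large enough that $|\lambda|\le j$. The key point is that $V$ agrees with $V^{(j)}$ on the prefix $\{1,\dots,N_{j+1}\}$, so by \eqref{eq:freeze-j} (applied with $W=V$) we have, for every $t\in T_j$,
\[
\bigl|\widehat{\mu}_{V,\lambda}(t)-\widehat{\mu}_{V^{(j)},\lambda}(t)\bigr|<\varepsilon .
\]
Combining this with \eqref{eq:Pj-amp} for that same $j$: there is an index $i=i^{(j)}(\lambda)$ with $|\widehat{\mu}_{V^{(j)},\lambda}(t_i^{(j)})|\ge \tfrac12-\varepsilon$, hence by the triangle inequality
\[
\bigl|\widehat{\mu}_{V,\lambda}(t_i^{(j)})\bigr|\ \ge\ \tfrac12-2\varepsilon .
\]
Since $t_i^{(j)}\in T_j\subset (j,\infty)$ and we may take $j$ as large as we wish (always keeping $j\ge|\lambda|$), this produces times tending to $+\infty$ at which $|\widehat{\mu}_{V,\lambda}|$ stays bounded below by the fixed positive constant $\tfrac12-2\varepsilon$ (which is $>0$ since $\varepsilon<\tfrac14$; with $\varepsilon=\tfrac1{10}$ it is $\tfrac3{10}$). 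Therefore $\widehat{\mu}_{V,\lambda}(t)\not\to 0$ as $|t|\to\infty$, i.e.\ $\mu_{V,\lambda}$ is non-Rajchman.

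\emph{Main obstacle and remarks.} The conceptual content is entirely in the inductive construction preceding the proposition; the proof of the proposition itself is just assembling the two inequalities \eqref{eq:freeze-j} and \eqref{eq:Pj-amp} and feeding the sparse structure into Lemma~\ref{lemma_ess_spec}. The one place requiring a little care is making sure that for a \emph{fixed} $\lambda$ one genuinely gets arbitrarily large return times: this is why the windows were set up with $T_j\subset(j,\infty)$ and with the amplitude bound \eqref{eq:Pj-amp} valid for all $|\lambda|\le j$, so that as $j\to\infty$ the same $\lambda$ is covered infinitely often at later and later times. No uniformity in $\lambda$ is claimed or needed — each $\lambda$ is handled by all sufficiently large $j$ — so there is no compactness issue at this stage. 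Finally one should note $V$ is real-valued and finite at every site (each barrier is a finite $K_j$), so $H_{V,\lambda}$ is a genuine bounded-below self-adjoint Schr\"odinger operator and the spectral measure $\mu_{V,\lambda}$ is well defined, closing the argument and hence Theorem~\ref{main_thm}.
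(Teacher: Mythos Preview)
Your proposal is correct and follows essentially the same argument as the paper: Part~1 is deduced directly from Lemma~\ref{lemma_ess_spec} using the sparse structure and diverging barriers, and Part~2 is obtained by combining \eqref{eq:Pj-amp} with \eqref{eq:freeze-j} (applied with $W=V$) via the triangle inequality to get $|\widehat{\mu}_{V,\lambda}(t_i^{(j)})|\ge\tfrac12-2\varepsilon$ along times $t_i^{(j)}\in(j,\infty)\to\infty$. The paper's proof is identical in substance and in the constants used.
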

\begin{proof}
    \begin{enumerate}
        \item This follows from Lemma~\ref{lemma_ess_spec}: by construction the special sites $n_k:=N_k+1$ satisfy $n_{k+1}-n_k\to\infty$ and the barrier heights $K_k\to\infty$, hence $\sigma_{\text{ess}}(H_{V,\lambda})=[-2,2]$ for every $\lambda$.
        \item Since $V$ is fixed, we denote $\widehat{\mu}_{V,\lambda}\coloneqq\widehat{\mu}_\lambda$ for every $\lambda\in\mathbb{R}$. Fix $\lambda\in\mathbb{R}$ and choose $r\in\mathbb{N}$ with $r\ge |\lambda|$. For each $j\ge r$, since $V^{(j)}$ satisfies \eqref{eq:Pj-amp}, there exists $i^{(j)}(\lambda)$ with
        \[
            \Bigl|\widehat{\mu}_{V^{(j)},\lambda}\!\bigl(t_{i^{(j)}(\lambda)}^{(j)}\bigr)\Bigr|\ \ge\ \frac12-\varepsilon,
        \]
        and $t_{i^{(j)}(\lambda)}^{(j)}\in (j,\infty)$. By \eqref{eq:freeze-j} (with $W=V$ and $n=j$), we have
\[
    \Bigl|\widehat{\mu}_{\lambda}\!\bigl(t_{i^{(j)}(\lambda)}^{(j)}\bigr)-\widehat{\mu}_{V^{(j)},\lambda}\!\bigl(t_{i^{(j)}(\lambda)}^{(j)}\bigr)\Bigr|
    \ <\ \varepsilon.
\]
Therefore,
\[
    \Bigl|\widehat{\mu}_{\lambda}\!\bigl(t_{i^{(j)}(\lambda)}^{(j)}\bigr)\Bigr|
    \ \ge\ \frac12-2\varepsilon=\frac{3}{10}.
\]
Since for every $j$ we have $T_j\subseteq\left(j,\infty\right)$, the sequence $t_{i^{(j)}(\lambda)}^{(j)}$ goes to infinity as $j\to\infty$. Since $\widehat{\mu}_\lambda$ does not converge to $0$ along this sequence, $\mu_\lambda$ is non-Rajchman as required.
    \end{enumerate}
\end{proof}

\begin{proof}[Proof of Theorem \ref{main_thm}]
    Theorem~\ref{main_thm} is an immediate consequence of Proposition~\ref{main_prop}.
\end{proof}


\end{document}